\documentclass{amsart}

\usepackage[T1]{fontenc}
\usepackage{mathpazo}
\usepackage{microtype}
\usepackage{fixltx2e}

\usepackage{amsmath, amsthm, amssymb}
\usepackage[all]{xy}

\usepackage[backend=bibtex]{biblatex}
\bibliography{\jobname.bib}

\newtheorem{theorem}{Theorem}
\newtheorem{proposition}[theorem]{Proposition}
\theoremstyle{definition}
\newtheorem{definition}[theorem]{Definition}

\renewcommand{\L}{\mathcal{L}}
\newcommand{\h}{\mathfrak{h}}
\renewcommand{\l}{\mathfrak{l}}
\renewcommand{\r}{\mathfrak{r}}
\renewcommand{\a}{\mathfrak{a}}
\newcommand{\q}{\mathfrak{q}}

\DeclareMathOperator{\Span}{Span}
\DeclareMathOperator{\Ker}{Ker}

\begin{document}

\title{The matrix {L}ie algebra on a one-step ladder is zero product
  determined}

\author{Daniel Brice}
\address{
  Department of Mathematics\\
  California State University, Bakersfield\\
  Bakersfield, California 93311, USA
}
\email{daniel.brice@csub.edu}

\date{16 October 2015 (Revised 2 December 2015)}

\begin{abstract}
  The class of \emph{matrix algebras on a ladder $\L$} generalizes the
  class of block upper triangular matrix algebras.
  It was previously shown that the matrix algebra on a ladder $\L$ is
  zero product determined under matrix multiplication.
  In this article, we show that the matrix algebra on a one-step
  ladder is zero product determined under the Lie bracket.
\end{abstract}

\maketitle

\section{Introduction}

In \cite{brice2015zero}, the authors defined a class of matrix algebras,
the \emph{ladder matrix algebras}, that generalizes the class of block
upper triangular matrix algebras. They introduce the notion of an upper
triangular $k$-step ladder as a method of parameterizing and indexing
these algebras. Certain one-step ladder matrix algebras arise as ideals
of derivation algebras of parabolic subalgebras of reductive Lie
algebras, which provided the motivation for their study
\cite{brice2014derivation}.

While these terms are made precise in Section \ref{sec:prelim},
the concepts are perhaps best illustrated with an example.
Let $\L = \{ (3,2), (6,5) \}$.
$\L$ is then a $2$-step upper triangular ladder on $6$.
The ladder matrix algebra on $\L$ is the subalgebra
\[
  M_\L = \left\{
  \begin{pmatrix}
    0 & * & * & * & * & * \\
    0 & * & * & * & * & * \\
    0 & * & * & * & * & * \\
    0 & 0 & 0 & 0 & * & * \\
    0 & 0 & 0 & 0 & * & * \\
    0 & 0 & 0 & 0 & * & *
  \end{pmatrix}
  \right\}
\]
of $M^{n \times n}$.

An algebra $(A, \ast)$ is \emph{zero product determined} if each
bilinear map $\varphi$ on $A \times A$ that preserves zero products
necessarily factors as a linear map $f$ on $A^2$ composed with the
algebra multiplication $\ast$ so that $\varphi(x,y) = f(x \ast y)$. The
notion is motivated by the linear preserver problem in operator theory
and has recently become a topic of considerable research
\cite{brevsar2009zero}. It was previously shown that the ladder matrix
algebras are zero product determined when $\ast$ is matrix
multiplication \cite{brice2015zero}. The purpose of this paper is to
show that a one-step ladder matrix algebra is zero product determined
when $\ast$ is the Lie bracket $[x,y] = xy - yx$.

Previous work on zero product determined algebras has also considered
the case where $\ast$ is the Jordan product $x \circ y = xy + yx$
\cite{brevsar2009zero}. Extending the present results on ladder matrix
algebras to this case, and to the $k$-step case for both the Lie bracket
and the Jordan product, remains a topic of interest to the author.

\section{Preliminaries}
\label{sec:prelim}

Let $F$ be a field.
Let $n$ be a positive integer.
Let $M^{n \times n}_F$ denote the space of $n$-by-$n$ matrices with
entries in $F$.
Let $e_{i,j}$ denote the matrix whose entry in the $i$th row $j$th
column is $1_F$, and whose other entries are $0_F$.

\begin{definition}
  A $k$-step \emph{ladder} on $n$ is a set of pairs of positive integers
  \[
    \L = \{ (i_1, j_1), ..., (i_k, j_k) \}
  \]
  with $1 \leq i_1 < i_2 < ... < i_k \leq n$
  and $1 \leq j_1 < j_2 < ... < j_k \leq n$.
  Each pair $(i_t, j_t)$ is called a \emph{step} of $\L$.
\end{definition}

\begin{definition}
  The \emph{ladder matrices} on $\L$ is the subspace
  \[
    M_\L = \Span \bigcup_{t = 1}^k
    \left\{
      e_{i,j} \middle\vert
      1 \leq i \leq i_t \text{ and } j_t \leq j \leq 
    \right\}
    \text{.}
  \]
\end{definition}

\begin{definition}
  A ladder $\L$ is called \emph{upper triangular} if $i_t < j_{t+1}$
  for $t = 1, 2, ..., k-1$.
\end{definition}

\begin{theorem}[\cite{brice2015zero}]\label{thm:A}
  Let $\L$ be a ladder on $n$.
  $M_\L$ is closed under matrix multiplication
  (and subsequently under the Lie bracket)
  if and only if $\L$ is upper triangular.
\end{theorem}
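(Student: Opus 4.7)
The plan is to reduce the question to the combinatorics of matrix units. Since $e_{i,j} e_{k,l} = \delta_{j,k} e_{i,l}$ and $M_\L$ is spanned by matrix units, $M_\L$ is closed under matrix multiplication if and only if whenever $e_{i,j}, e_{j,l} \in M_\L$, the element $e_{i,l}$ also lies in $M_\L$. Call a position $(i,j)$ \emph{covered by step $s$} when $i \leq i_s$ and $j \geq j_s$; then $e_{i,j} \in M_\L$ precisely when $(i,j)$ is covered by some step of $\L$.

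For the forward direction, assume $\L$ is upper triangular, and suppose $(i,j)$ is covered by step $s$ while $(j,l)$ is covered by step $t$. I would show that step $s$ also covers $(i,l)$. Certainly $i \leq i_s$. To verify $l \geq j_s$: if $s \leq t$ then $j_s \leq j_t \leq l$ and we are done. If instead $s > t$, then $j_s \leq j \leq i_t$, while iterating the upper triangular condition $i_r < j_{r+1}$ yields $i_t < j_{t+1} \leq j_s$, a contradiction; hence this case cannot arise.

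For the converse, I would exhibit an explicit failure of closure. If $\L$ is not upper triangular, pick $t$ with $i_t \geq j_{t+1}$. Then step $t+1$ covers the position $(i_{t+1}, i_t)$, since $i_{t+1} \leq i_{t+1}$ and $i_t \geq j_{t+1}$, and step $t$ covers $(i_t, j_t)$ trivially. Hence both $e_{i_{t+1}, i_t}$ and $e_{i_t, j_t}$ lie in $M_\L$, but their product $e_{i_{t+1}, j_t}$ does not: for any $s \leq t$ one has $i_s \leq i_t < i_{t+1}$, and for any $s \geq t+1$ one has $j_s \geq j_{t+1} > j_t$, so no step covers $(i_{t+1}, j_t)$.

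The parenthetical clause regarding the Lie bracket is then immediate, as closure of $M_\L$ under matrix multiplication forces closure under $[x,y] = xy - yx$ with no additional argument. The main technical point is bookkeeping in the case analysis $s \leq t$ versus $s > t$ in the forward direction; the counterexample in the converse is essentially forced by where the staircase fails to be upper triangular.
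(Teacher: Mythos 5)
The paper does not prove this theorem at all: it is quoted from \cite{brice2015zero} and used as a black box, so there is no in-text argument to compare yours against. Judged on its own, your proof is correct and complete. The reduction to matrix units is legitimate because $M_\L$ is a coordinate subspace (spanned by a fixed set of $e_{i,j}$'s), so closure under multiplication is equivalent to closure on the spanning matrix units, and membership of a linear combination in $M_\L$ is detected unit by unit; your ``covered by step $s$'' bookkeeping, the contradiction ruling out $s > t$ via iterating $i_r < j_{r+1}$, and the explicit non-covered product $e_{i_{t+1}, j_t}$ in the converse all check out. One small remark: if the theorem is read as also asserting that failure of upper triangularity destroys closure under the Lie bracket (not just under multiplication), your counterexample still works with no extra effort, since $[e_{i_{t+1}, i_t}, e_{i_t, j_t}]$ has coefficient $1$ on the unit $e_{i_{t+1}, j_t} \notin M_\L$ and $M_\L$ is a coordinate subspace; it would be worth adding that one sentence so the parenthetical clause is covered in both directions.
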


We remind the reader that if $x, y \in M^{n \times n}_F$, then the Lie
bracket of $x$ and $y$, denoted $[x, y]$, is the matrix $xy - yx$.
A subspace of $M^{n \times n}_\L$ closed under $[\cdot, \cdot]$ is
termed a \emph{Lie algebra}.

In light of Theorem \ref{thm:A},
whenever $\L$ is upper triangular we will call $M_\L$ the
\emph{matrix algebra on $\L$} in case we are considering $M_\L$ as an
algebra under matrix multiplication or the
\emph{matrix Lie algebra on $\L$} in case we are considering $M_\L$ as
an algebra under the Lie bracket.

The following proposition establishes that the class of block upper
triangular matrix algebras is a subclass of the class of ladder matrix
algebras.

\begin{proposition}\label{prop:A}
  Let $\q \subseteq M^{n \times n}_F$ be a block upper triangular matrix
  algebra (res. Lie algebra).
  There is an upper triangular ladder $\L$ such that $\q = M_\L$.
\end{proposition}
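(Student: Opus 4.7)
The plan is to exhibit the ladder $\L$ explicitly in terms of the block data of $\q$, then verify that $M_\L$ matches $\q$ as a vector space (which suffices, since Theorem~\ref{thm:A} then automatically promotes it to an equality of associative or Lie algebras).

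First, I would unpack the definition of a block upper triangular matrix algebra. Such a $\q$ is determined by a composition of $n$, say $n = n_1 + n_2 + \cdots + n_k$, with partial sums $s_0 = 0 < s_1 < s_2 < \cdots < s_k = n$. Its basis consists of those $e_{i,j}$ for which the block row of $i$ is no larger than the block column of $j$; equivalently, letting $t_0$ be the unique index with $s_{t_0-1} < i \leq s_{t_0}$, the matrix unit $e_{i,j}$ lies in $\q$ exactly when $j > s_{t_0 - 1}$.

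Next, I would define
\[
  \L = \bigl\{(s_1, s_0 + 1),\, (s_2, s_1 + 1),\, \ldots,\, (s_k, s_{k-1} + 1)\bigr\},
\]
so that the $t$th step is $(i_t, j_t) = (s_t, s_{t-1}+1)$. The strict monotonicity $s_0 < s_1 < \cdots < s_k$ immediately gives $i_1 < \cdots < i_k$ and $j_1 < \cdots < j_k$, so $\L$ is a ladder; and $i_t = s_t < s_t + 1 = j_{t+1}$ shows that $\L$ is upper triangular.

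Finally, I would compare the spanning basis of $M_\L$ with that of $\q$. By definition $e_{i,j} \in M_\L$ iff there exists $t$ with $i \leq s_t$ and $j \geq s_{t-1}+1$. Taking $t = t_0$ (the block index of $i$) shows that every basis element of $\q$ lies in $M_\L$; conversely, if such a $t$ exists then $t \geq t_0$, whence $s_{t-1} \geq s_{t_0 - 1}$ and the condition $j > s_{t_0 - 1}$ required for membership in $\q$ is satisfied. Thus $M_\L = \q$ as subspaces, and Theorem~\ref{thm:A} makes this an identification of algebras in either the associative or the Lie sense. The argument is essentially a bookkeeping translation between two parameterizations, so the main (minor) obstacle is simply verifying that the naive choice of steps correctly reproduces both the row cutoffs and the column cutoffs of the block pattern.
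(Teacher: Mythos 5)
Your proposal is correct and takes essentially the same approach as the paper: the ladder $\L$ you construct, with steps $(s_t, s_{t-1}+1)$, is exactly the paper's $\bigl(\sum_{i=1}^{t} n_i,\ 1+\sum_{i=1}^{t-1} n_i\bigr)$. The only difference is that you spell out the basis-element verification that $\q = M_\L$, which the paper leaves as ``constructed so that.''
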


\begin{proof}
  Block upper triangular matrix algebras (res. Lie algebras)
  correspond with partitions of $n$ \cite{knapp2002lie}.
  Let $\pi = (n_1, n_2, ..., n_k)$ be the partition of $n$ corresponding
  to $\q$.
  Let
  \[
    \L = \left\{
    \left( \sum_{i = 1}^t n_i, 1 + \sum_{i = 1}^{t - 1} n_i \right)
    \middle\vert 1 \leq t \leq k
    \right\}
  \]
  where $\sum_{i=1}^0 n_i$ should be understood to be $0$.
  $\L$ is upper triangular by construction, and furthermore is
  constructed so that $\q = M_\L$.
\end{proof}

Stated perhaps more clearly, the block upper triangular matrix algebras
are precisely the ladder matrix algebras where $j_{t + 1} = i_t + 1$ for
$t = 1, 2, ..., k - 1$.

\begin{definition}
  An \emph{algebra} over $F$ is a pair $(A, \mu)$ where $A$ is a vector
  space over $F$ and $\mu : A \otimes A \to A$ is an $F$-linear map.
  The image of $\mu$ is denoted by $A^2$.
\end{definition}

This definition of algebra does not assume that the multiplication map
$\mu$ is associative.
This definition is chosen because it is agnostic to whether we are
considering $M_\L$ as an associative algebra under
$\mu : x \otimes y \mapsto xy$
or as a Lie algebra under
$\mu : x \otimes y \mapsto [x,y]$.

\begin{definition}
  An algebra is called \emph{zero product determined} if for each
  $F$-linear map $\varphi : A \otimes A \to X$
  (where $X$ is an arbitrary vector space over $F$)
  the condition
  \begin{equation}\label{eq:A}
    \forall x, y \in A,
    \varphi( x \otimes y) = 0 \text{ whenever } \mu( x \otimes y) = 0
  \end{equation}
  ensures that $\varphi$ factors through $\mu$.
\end{definition}

\[
  \xymatrix{
    A \otimes A \ar[d]_\mu \ar[dr]^\varphi & \\
    A^2 \ar@{-->}[r]_f & X
  }
\]

A linear map satisfying condition \ref{eq:A} is said to \emph{preserve
zero products}.
By $\varphi$ \emph{factors through} $\mu$ it is meant that there is a
linear map $f : A^2 \to X$ such that $\varphi = f \circ \mu$,
as illustrated above.
If $\varphi$ factors through $\mu$, then condition \ref{eq:A} holds
trivially.
We note that in case $(A, \mu)$ is zero product determined
and $\varphi : A \otimes A \to X$ preserves zero products,
then the map $f$ such that
$\varphi = f \circ \mu$
is uniquely determined.

The notion of a zero-product determined algebra was introduced by
Matej Bre{\v{s}}ar, Mateja Gra{\v{s}}i{\v{c}}, and Juana S{\'a}nchez
Ortega in \cite{brevsar2009zero} to further the study of
near-homomorphisms on Banach algebras.
We present below the results of interest to us in this paper.

\begin{theorem}[\cite{brevsar2009zero}]\label{thm:B}
  $M^{n \times n}_F$ considered as an algebra under either matrix
  multiplication or the Lie bracket is zero product determined.
\end{theorem}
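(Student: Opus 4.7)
My plan is to handle both assertions by working in the basis of matrix units $\{e_{i,j}\}$ of $M^{n \times n}_F$ and constructing the factorization $f$ on the image of $\mu$ directly. Given $\varphi \colon M^{n \times n}_F \otimes M^{n \times n}_F \to X$ preserving zero products, the factorization exists if and only if $\sum_t \varphi(x_t \otimes y_t) = 0$ whenever $\sum_t \mu(x_t \otimes y_t) = 0$; by bilinearity, it suffices to verify this on relations among matrix units, which reduces the problem to a combinatorial accounting of the zero products $e_{i,j} e_{k,l}$ (respectively $[e_{i,j}, e_{k,l}]$).

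For the associative case, the rule $e_{i,j} e_{k,l} = \delta_{jk} e_{i,l}$ forces $\varphi(e_{i,j} \otimes e_{k,l}) = 0$ whenever $j \neq k$. I would then define $f(e_{i,l}) := \varphi(e_{i,j} \otimes e_{j,l})$ for an arbitrary choice of $j$, and establish independence of $j$ by testing $\varphi$ on the zero-product element $(e_{i,j} - e_{i,m}) \otimes (e_{j,l} + e_{m,l})$ together with the vanishing of the cross terms $\varphi(e_{i,j} \otimes e_{m,l})$ and $\varphi(e_{i,m} \otimes e_{j,l})$ for $j \neq m$. Extending $f$ linearly to $M^{n \times n}_F$ and checking $\varphi(x \otimes y) = f(xy)$ on pairs of matrix units completes this case.

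For the Lie bracket, the identity $[e_{i,j}, e_{k,l}] = \delta_{jk} e_{i,l} - \delta_{il} e_{k,j}$ shows that the image of $\mu$ is the trace-zero subspace. I would define $f$ on off-diagonal matrix units $e_{i,l}$ (with $i \neq l$) by $f(e_{i,l}) := \varphi(e_{i,j} \otimes e_{j,l})$ for some auxiliary $j \notin \{i,l\}$ (so that the second term of the bracket vanishes), and on trace-zero diagonal matrices by $f(e_{i,i} - e_{k,k}) := \varphi(e_{i,k} \otimes e_{k,i})$, extending linearly in each case. The main obstacle is proving well-definedness: one must enumerate enough zero-bracket relations among linear combinations of matrix units to force every apparent ambiguity in $f$ to collapse, and in particular verify consistency between the off-diagonal and diagonal subcases whenever $[x,y]$ mixes the two. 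A secondary concern is characteristic: if $\mathrm{char}(F) \mid n$, then $I$ itself is trace-zero and must be recovered as a bracket; the combinatorial scheme should extend at the cost of enumerating additional relations.
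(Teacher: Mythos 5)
First, note that the paper offers no proof of Theorem \ref{thm:B}: it is quoted from \cite{brevsar2009zero} and used as a black box (entering the main argument only through the count of rank-one tensors in $\h \otimes \h$). Your proposal therefore stands or falls on its own, and its associative half succeeds: identifying the zero products $e_{i,j}e_{k,l} = 0$ for $j \neq k$, setting $f(e_{i,l}) := \varphi(e_{i,j} \otimes e_{j,l})$, and proving independence of $j$ by evaluating $\varphi$ on the zero-product element $(e_{i,j} - e_{i,m}) \otimes (e_{j,l} + e_{m,l})$ is exactly the required computation, and the bilinear check on matrix-unit pairs then closes that case.

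The Lie half, however, has a genuine gap, and it sits exactly where the content of the theorem lies. You write that ``one must enumerate enough zero-bracket relations \dots to force every apparent ambiguity in $f$ to collapse''; that enumeration \emph{is} the proof, and you have not supplied it --- nor is it obvious in advance that enough relations exist, since that existence is precisely what the theorem asserts. Concretely, verifying $\varphi(x \otimes y) = f([x,y])$ on matrix-unit pairs requires at least: (a) independence of the auxiliary index in $f(e_{i,l}) := \varphi(e_{i,j} \otimes e_{j,l})$, including $j \in \{i,l\}$, which you exclude even though $[e_{i,i}, e_{i,l}] = e_{i,l}$ forces $\varphi(e_{i,i} \otimes e_{i,l}) = f(e_{i,l})$ as well, and even though for $n = 2$ no $j \notin \{i,l\}$ exists; (b) the sign-reversed identities $\varphi(e_{i,j} \otimes e_{k,i}) = -\varphi(e_{k,i} \otimes e_{i,j})$, needed because $[e_{i,j}, e_{k,i}] = -e_{k,j}$ for $j \neq k$ --- these follow from expanding $\varphi(u \otimes u) = 0$ for $u = e_{i,j} + e_{k,i}$, an idea absent from your outline; and (c) well-definedness of the diagonal assignment $f(e_{i,i} - e_{k,k}) := \varphi(e_{i,k} \otimes e_{k,i})$ under telescoping relations such as $(e_{1,1} - e_{2,2}) + (e_{2,2} - e_{3,3}) + (e_{3,3} - e_{1,1}) = 0$, which demands the nontrivial identity $\varphi(e_{1,2} \otimes e_{2,1}) + \varphi(e_{2,3} \otimes e_{3,2}) + \varphi(e_{3,1} \otimes e_{1,3}) = 0$ that you would have to manufacture from further zero-bracket tensors. (Your worry about $\operatorname{char} F \mid n$ is, by contrast, a non-issue: in that case the identity matrix already lies in the span of the $e_{i,i} - e_{i+1,i+1}$, so no new generator of the image of $\mu$ arises.) Until (a)--(c) and their interactions are actually carried out, the Lie-bracket assertion remains unproved.
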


\begin{theorem}[\cite{gravsivc2010zero}]
  The classical Lie algebras are zero product determined.
\end{theorem}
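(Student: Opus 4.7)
The plan is to use the standard root space decomposition available for each classical Lie algebra and reduce the zero-product-determined condition to a consistency check on a Chevalley basis. Recall that for a Lie algebra $L$, zero-product-determinedness is equivalent to the following: whenever $\sum_i [x_i, y_i] = 0$ in $L$, every bilinear $\varphi$ vanishing on commuting pairs satisfies $\sum_i \varphi(x_i, y_i) = 0$. For a semisimple $L$ we have $[L, L] = L$, so once this is established the map $f : L \to X$ factoring $\varphi$ is automatic. I would therefore fix a classical Lie algebra $L$ with Cartan subalgebra $\h$, root system $\Phi$, root space decomposition $L = \h \oplus \bigoplus_{\alpha \in \Phi} L_\alpha$, and Chevalley basis $\{h_i\} \cup \{e_\alpha\}_{\alpha \in \Phi}$.

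Next I would classify the basis pairs according to their brackets. Since $[h_i, h_j] = 0$ we get $\varphi(h_i, h_j) = 0$ for free, and whenever $\alpha + \beta \neq 0$ is not a root, $[e_\alpha, e_\beta] = 0$ forces $\varphi(e_\alpha, e_\beta) = 0$. The remaining cases are: (i) $[h_i, e_\alpha] = \alpha(h_i) e_\alpha$; (ii) $[e_\alpha, e_\beta] = N_{\alpha,\beta} e_{\alpha + \beta}$ with $\alpha + \beta \in \Phi$; and (iii) $[e_\alpha, e_{-\alpha}] = h_\alpha \in \h$. I would then define a candidate $f$ by setting $f(e_\gamma)$ via any chosen decomposition $\gamma = \alpha + \beta$, $f(h_\alpha) = \varphi(e_\alpha, e_{-\alpha})$ for each root $\alpha$, and extending linearly to $\h$, with the goal of verifying that $\varphi = f \circ [\cdot, \cdot]$ on every basis pair.

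The main obstacle is well-definedness of $f$, particularly on $\h$. Because the rank of $L$ is typically much smaller than $|\Phi|$, the family $\{h_\alpha\}_{\alpha \in \Phi}$ satisfies many linear relations, and each such relation $\sum_\alpha c_\alpha h_\alpha = 0$ must be mirrored by the relation $\sum_\alpha c_\alpha \varphi(e_\alpha, e_{-\alpha}) = 0$ among the prospective values of $f$. Each such relation is deduced by applying the zero-product-preserving hypothesis to the tensor $\sum_\alpha c_\alpha (e_\alpha \otimes e_{-\alpha})$, whose bracket sum vanishes by hypothesis. Analogous witness tensors handle cases (i) and (ii): for (i), compare $[h_i, e_\alpha]$ against $\alpha(h_i)^{-1} [e_{\alpha_0}, e_{\alpha - \alpha_0}]$ for any suitable decomposition of $\alpha$; for (ii), show that different $(\alpha, \beta)$ decompositions of the same root $\gamma$ produce a bracket relation, and invoke the hypothesis.

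The argument applies uniformly to $A_n, B_n, C_n, D_n$, with Theorem \ref{thm:B} giving the $\mathfrak{gl}_n$ ambient setting in which the Chevalley generators of $\mathfrak{sl}_n, \mathfrak{so}_n, \mathfrak{sp}_{2n}$ can be realized concretely as matrix commutators. The case analysis for case (iii) is the heaviest, and is slightly more delicate for $D_n$ where the root system is less symmetric than $B_n$ or $C_n$; but in every case the zero-product-preserving hypothesis provides exactly the relations needed to define $f$ consistently.
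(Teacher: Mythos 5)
The paper does not actually prove this statement; it quotes it from \cite{gravsivc2010zero} as background, so there is no internal proof to compare against. Judged on its own terms, your outline has the right skeleton (root space decomposition, reduction to showing that $\Ker \mu$ is spanned by rank-one tensors $x \otimes y$ with $[x,y] = 0$, in the spirit of Theorem \ref{thm:C}), but it begs the question at the decisive step. The zero-product-preserving hypothesis gives $\varphi(x,y) = 0$ only for a \emph{single} pair with $[x,y] = 0$; it says nothing about a sum $\sum_i x_i \otimes y_i$ whose brackets merely cancel. When you write that a relation $\sum_\alpha c_\alpha h_\alpha = 0$ is ``deduced by applying the zero-product-preserving hypothesis to the tensor $\sum_\alpha c_\alpha (e_\alpha \otimes e_{-\alpha})$, whose bracket sum vanishes,'' you are applying the hypothesis to an element of $\Ker \mu$ that is not rank-one. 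Whether $\varphi$ kills that element is precisely what must be proved: one has to exhibit an explicit rewriting of $\sum_\alpha c_\alpha\, e_\alpha \otimes e_{-\alpha}$ as a combination of tensors $u \otimes v$ with $[u,v] = 0$. Already for $\mathfrak{sl}_3$ the relation $h_{\alpha+\beta} = h_\alpha + h_\beta$ forces you to decompose $e_{\alpha+\beta} \otimes e_{-\alpha-\beta} - e_\alpha \otimes e_{-\alpha} - e_\beta \otimes e_{-\beta}$ into commuting rank-one pieces, and no such decomposition appears in your argument. The same objection applies verbatim to your ``witness tensors'' for cases (i) and (ii).

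The repair is to do what the paper does in its Main Result for the ladder algebra and what Theorem \ref{thm:B} does for $M^{n \times n}_F$: compute $\dim \Ker \mu$ and then explicitly construct that many linearly independent rank-one tensors killed by $\mu$. The symmetric relations come for free, since $x \otimes y + y \otimes x = (x+y)\otimes(x+y) - x \otimes x - y \otimes y$ is a combination of rank-one kernel tensors; but the antisymmetric relations --- in particular those among the coroots $h_\alpha$ and among the different factorizations $\gamma = \alpha + \beta$ of a fixed root --- require bespoke tensors analogous to the $S$, $R$, $V$, $W$ families in the paper's own proof. Two smaller problems: $\alpha(h_i)$ may vanish, so $\alpha(h_i)^{-1}$ is not always available; and a simple root admits no decomposition $\alpha = \alpha_0 + (\alpha - \alpha_0)$ into two roots, so $f(e_\alpha)$ cannot be defined the way you propose when $\alpha$ is simple.
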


\begin{theorem}[\cite{wang2011class}]
  The simple Lie algebras over $\mathbb{C}$ and their parabolic
  subalgebras are zero product determined.
\end{theorem}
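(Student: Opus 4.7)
The plan is to use the root space decomposition $\mathfrak{g} = \h \oplus \bigoplus_{\alpha \in \Phi} \mathfrak{g}_\alpha$ of a simple complex Lie algebra $\mathfrak{g}$ and construct the required factorization $f : \mathfrak{g}^2 \to X$ by specifying its values on a Chevalley basis. Since $\mathfrak{g}$ is simple and hence perfect, $\mathfrak{g}^2 = \mathfrak{g}$, so $f$ must be defined on all of $\mathfrak{g}$, and uniqueness of $f$ forces its values to be dictated by the values of $\varphi$ on convenient bracket-generating pairs.

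First I would extract the constraints that the zero product condition places on $\varphi$. For root vectors $e_\alpha, e_\beta$ with $\alpha + \beta \notin \Phi \cup \{0\}$, the bracket $[e_\alpha, e_\beta]$ vanishes and hence $\varphi(e_\alpha, e_\beta) = 0$. For $h \in \h$ with $\alpha(h) = 0$ we similarly get $\varphi(h, e_\alpha) = 0$, and combining with $F$-linearity in the first slot yields $\varphi(h, e_\alpha) = \alpha(h)\, c_\alpha$ for some constant $c_\alpha \in X$ depending only on $\alpha$. The case $\varphi(h, h') = 0$ for $h, h' \in \h$ is likewise immediate, since the Cartan is abelian.

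Second, I would define $f$ on a Chevalley basis by $f(e_\alpha) = c_\alpha$ and $f(h_{\alpha_i}) = \varphi(e_{\alpha_i}, e_{-\alpha_i})$ for simple roots $\alpha_i$, extended linearly. The identity $\varphi(x,y) = f([x,y])$ must then be verified on all basis pairs. The pairs $(h, h')$, $(h, e_\alpha)$, and $(e_\alpha, e_\beta)$ with $\alpha + \beta \notin \Phi \cup \{0\}$ fall out of the constraints above. The remaining cases $(e_\alpha, e_\beta)$ with $\alpha + \beta \in \Phi$, and $(e_\alpha, e_{-\alpha})$ for non-simple $\alpha$, require genuine consistency checks: the values of $\varphi$ on these pairs must agree with what $f$ assigns to the corresponding bracket when that bracket is re-expressed in the chosen basis.

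These consistency checks are where I expect the main obstacle to lie. The technique is to ``polarize'' $\varphi$ by evaluating it on sums such as $\varphi(e_\alpha + e_\gamma, z)$ and comparing with the expansion from bilinearity; the vanishing of particular brackets in the root system, combined with the structure constants $N_{\alpha,\beta}$, then forces exactly the identities among the $c_\alpha$ and the $\varphi(e_\alpha, e_{-\alpha})$ that are needed. For a parabolic subalgebra $\mathfrak{p} = \l \oplus \mathfrak{u}$ the same strategy applies using the Cartan $\h \subseteq \l \subseteq \mathfrak{p}$ and the restricted set of roots whose root spaces lie in $\mathfrak{p}$, with the caveat that $\mathfrak{p}$ need not be perfect --- the center $\mathfrak{z}(\l)$ may fail to lie in $[\mathfrak{p},\mathfrak{p}]$ --- but this is unproblematic since the factorization is only required on $\mathfrak{p}^2$, so no definition of $f$ on $\mathfrak{z}(\l)$ is needed.
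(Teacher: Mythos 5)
First, a point of order: the paper does not prove this statement---it is quoted from the cited reference as background, so there is no in-paper proof to compare against. (Had the paper proved it, it would most likely have done so via its Theorem \ref{thm:C}, i.e.\ by exhibiting a basis of $\Ker\mu$ consisting of rank-one tensors, which is the dual formulation of the factorization you construct and is the method the paper uses for its own main result.) Judged on its own terms, your outline follows the standard strategy of the literature on this problem: the forced values $f(e_\alpha)=c_\alpha$ and $f(h_{\alpha_i})=\varphi(e_{\alpha_i},e_{-\alpha_i})$ are correctly identified, the derivation $\varphi(h,e_\alpha)=\alpha(h)c_\alpha$ from vanishing on $\ker\alpha$ is right, and the remark that a parabolic $\mathfrak{p}$ need not be perfect is handled correctly, since $f$ need only be defined on $\mathfrak{p}^2$.

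The gap is that the two cases you correctly flag as ``where the main obstacle lies'' are the entire content of the theorem, and the proposal does not carry them out. For $\varphi(e_\alpha,e_\beta)$ with $\alpha+\beta\in\Phi$, the polarization you gesture at requires producing an explicit commuting pair, e.g.\ $x=h+e_\alpha$ with $\beta(h)=0$ and $y=e_\beta+te_{\alpha+\beta}$ with $t(\alpha+\beta)(h)+N_{\alpha,\beta}=0$; this only yields $[x,y]=0$ when $2\alpha+\beta\notin\Phi$, so root strings of length greater than one force an induction along the string (and in a parabolic one must also check that the auxiliary root vectors used actually lie in $\mathfrak{p}$). For $\varphi(e_\alpha,e_{-\alpha})$ with $\alpha$ non-simple, one must verify that the value is consistent with $f$ applied to the integral expansion $h_\alpha=\sum_i m_ih_{\alpha_i}$, which is a genuine computation relating $\varphi(e_\alpha,e_{-\alpha})$ to $\varphi(e_{\alpha+\gamma}, e_{-\alpha-\gamma})$ and $\varphi(e_\gamma,e_{-\gamma})$ via further commuting pairs, not something that follows formally from bilinearity. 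As written, the proposal asserts that polarization ``forces exactly the identities needed'' without exhibiting a single such identity, so it is a correct plan rather than a proof.
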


\begin{theorem}[\cite{brice2015zero}]\label{thm:D}
  An abelien Lie algebra is zero product determined.
\end{theorem}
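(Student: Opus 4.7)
The proof plan is essentially to observe that for an abelian Lie algebra the entire setup degenerates to triviality, so the factorization condition holds vacuously. Since $[x,y]=0$ for every $x,y$ in an abelian Lie algebra $\a$, the multiplication map $\mu : \a \otimes \a \to \a$ vanishes on every pure tensor. Pure tensors span $\a \otimes \a$ and $\mu$ is $F$-linear, so $\mu = 0$ identically, and consequently $\a^2 = \mu(\a \otimes \a) = \{0\}$.

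Now let $\varphi : \a \otimes \a \to X$ be any linear map that preserves zero products in the sense of condition (\ref{eq:A}). Since the hypothesis ``$\mu(x \otimes y) = 0$'' in (\ref{eq:A}) is satisfied by every pair $x,y \in \a$, the condition forces $\varphi(x \otimes y) = 0$ for every pure tensor. Invoking linearity of $\varphi$ together with the fact that pure tensors span $\a \otimes \a$, I conclude $\varphi$ is the zero map.

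It remains to exhibit the factoring map. Define $f : \a^2 \to X$ to be the unique linear map from the zero space $\{0\}$ to $X$, namely $f(0) = 0$. Then $f \circ \mu$ is identically zero on $\a \otimes \a$, matching $\varphi$. Hence $\varphi = f \circ \mu$, which is precisely the factoring required by the definition of a zero product determined algebra.

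There is no real obstacle here; the statement is formally a triviality once one notices that the hypothesis of (\ref{eq:A}) is universally satisfied in the abelian case. The only care needed is to check that the definition of ``zero product determined'' in the paper is being applied correctly when $\mu \equiv 0$, and in particular that a linear map $f : \{0\} \to X$ does exist and does make the diagram commute.
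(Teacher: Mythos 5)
Your argument is correct and complete: in the abelian case $\mu$ vanishes identically, condition (\ref{eq:A}) then forces $\varphi$ to kill every rank-one tensor and hence all of $\a \otimes \a$, and the zero map $f : \{0\} \to X$ furnishes the factorization. The paper itself imports this statement from \cite{brice2015zero} without reproducing a proof, so there is nothing to compare line by line; your route is the natural one. Worth noting: Theorem \ref{thm:C} gives an even shorter version of the same observation --- when $\mu = 0$ we have $\Ker \mu = \a \otimes \a$, which is generated by rank-one tensors by definition, so the conclusion is immediate. Either way the content is the same, and your care in checking that a linear map out of the zero space exists and makes the diagram commute is exactly the right pedantic point to verify.
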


\begin{theorem}[\cite{brice2015zero}]
  If $\L$ is upper triangular, then $M_\L$ under matrix multiplication
  is zero product determined.
\end{theorem}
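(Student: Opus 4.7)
The plan is to fix an arbitrary linear map $\varphi : M_\L \otimes M_\L \to X$ preserving zero products and to construct an explicit linear $f : M_\L^2 \to X$ with $\varphi = f \circ \mu$. Throughout I work with the basis of $M_\L$ consisting of those matrix units $e_{i,j}$ which lie in $M_\L$, and I exploit the standard identity $e_{a,b} e_{c,d} = \delta_{b,c}\, e_{a,d}$.

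The first step eliminates basis tensors in $\Ker \mu$: whenever $j \neq k$, $\mu(e_{i,j} \otimes e_{k,l}) = 0$, so zero-product preservation forces $\varphi(e_{i,j} \otimes e_{k,l}) = 0$. By bilinearity the problem then reduces to controlling the values $\varphi(e_{i,j} \otimes e_{j,l})$ on triples $(i,j,l)$ for which both $e_{i,j}$ and $e_{j,l}$ lie in $M_\L$; call such a $j$ an \emph{admissible intermediate} for the pair $(i,l)$.

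The core step is to show this value depends only on $(i,l)$, not on the choice of admissible intermediate. Given two distinct admissible intermediates $j \neq j'$ for $(i,l)$, I exploit the identity
\[
  (e_{i,j} + e_{i,j'})(e_{j,l} - e_{j',l}) = e_{i,l} - 0 + 0 - e_{i,l} = 0
\]
in $M^{n \times n}_F$. Applying $\varphi$ to the associated tensor, expanding by bilinearity, and killing the two cross-terms via the first step yields $\varphi(e_{i,j} \otimes e_{j,l}) = \varphi(e_{i,j'} \otimes e_{j',l})$. Setting $f(e_{i,l}) := \varphi(e_{i,j} \otimes e_{j,l})$ for any admissible $j$ then defines $f$ unambiguously on a basis of $M_\L^2$, and extending $f$ linearly gives $\varphi = f \circ \mu$ on each basis tensor by the two cases above.

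The main obstacle is not the zero-product identity itself but verifying that all four of $e_{i,j}, e_{i,j'}, e_{j,l}, e_{j',l}$ actually lie in $M_\L$ whenever $j$ and $j'$ are both admissible intermediates for $(i,l)$. This is a combinatorial consequence of $\L$ being upper triangular: Theorem \ref{thm:A} supplies the closure property needed, and the witnessing steps $(i_s, j_s)$ for $e_{i,j}$ and $(i_t, j_t)$ for $e_{j',l}$ can, by monotonicity of the step indices, be recombined using $\max(s,t)$ and $\min(s,t)$ to produce witnesses for the two mixed pairs. I expect this index bookkeeping to be the only substantive labor in the argument.
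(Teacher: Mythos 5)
Your argument is correct and complete, but note first that the present paper does not actually prove this statement: it is imported from \cite{brice2015zero}, and the only related machinery developed here is Theorem \ref{thm:C}, which recasts zero product determination as showing that $\Ker \mu$ is spanned by rank-one tensors (this is the strategy the paper's own main result follows). Your route is the complementary, direct one: annihilate $\varphi$ on the basis tensors $e_{i,j} \otimes e_{k,l}$ with $j \neq k$, prove that $\varphi(e_{i,j} \otimes e_{j,l})$ is independent of the admissible intermediate $j$ via the rank-one relation $(e_{i,j} + e_{i,j'})(e_{j,l} - e_{j',l}) = 0$, and read off $f$ on the basis $\{ e_{i,l} \}$ of $M_\L^2$; this is the classical Bre{\v{s}}ar-style argument for $M^{n \times n}_F$ and it transfers verbatim, arguably more cleanly than a kernel-basis construction since no dimension counting is needed. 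One correction, in your favor: the step you flag as ``the only substantive labor'' is vacuous. By your own definition, $j$ being an admissible intermediate for $(i,l)$ already means $e_{i,j}, e_{j,l} \in M_\L$, and $j'$ being admissible means $e_{i,j'}, e_{j',l} \in M_\L$, so all four matrix units you need lie in $M_\L$ by hypothesis and no recombination of ladder steps is required. The upper-triangularity of $\L$ enters only through Theorem \ref{thm:A}, to guarantee that $M_\L$ is closed under multiplication and hence is an algebra at all; beyond that, your argument in fact shows that any span of matrix units closed under matrix multiplication is zero product determined.
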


Recall that $A \otimes A = \Span \{ x \otimes y \vert x, y \in A\}$.
Members of $A \otimes A$ of the form $x \otimes y$ with $x, y \in A$
are called \emph{rank-one} tensors.
We will make extensive use of the following theorem.

\begin{theorem}[\cite{brice2015zero}]\label{thm:C}
  An algebra
  $(A, \mu)$ is zero product determined if and only if $\Ker \mu$ is
  generated by rank-one tensors.
\end{theorem}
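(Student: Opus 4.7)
The theorem is essentially a universal-property reformulation of the zero-product-determined condition, and my plan is to prove both implications by reducing to the first isomorphism theorem applied to $\mu : A \otimes A \to A^2$.

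For the easier backward direction, suppose $\Ker \mu$ is spanned by rank-one tensors, and let $\varphi : A \otimes A \to X$ preserve zero products. Condition \ref{eq:A} says precisely that $\varphi$ vanishes on every rank-one tensor $x \otimes y$ lying in $\Ker \mu$. Since such tensors span $\Ker \mu$ by hypothesis, $\varphi$ vanishes on all of $\Ker \mu$, and hence factors uniquely through the quotient $(A \otimes A)/\Ker \mu$, which is canonically isomorphic to $A^2$ via $\mu$. The induced map $f : A^2 \to X$ is the required factorization $\varphi = f \circ \mu$.

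For the forward direction, let $K$ denote the subspace of $A \otimes A$ spanned by the rank-one tensors in $\Ker \mu$, so that $K \subseteq \Ker \mu$ by construction and the goal is to prove equality. The natural candidate to test against the ZPD hypothesis is the quotient map $\pi : A \otimes A \to (A \otimes A)/K$: it vanishes on every rank-one tensor in $\Ker \mu$ and so preserves zero products. Assuming ZPD, I obtain $\pi = f \circ \mu$ for some linear $f$; then for any $t \in \Ker \mu$, $\pi(t) = f(\mu(t)) = f(0) = 0$, forcing $t \in K$ and thus $\Ker \mu = K$.

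Neither direction presents a real technical obstacle; the key conceptual point is to recognize that condition \ref{eq:A} is already a statement about rank-one tensors in $\Ker \mu$ rather than about arbitrary elements of $\Ker \mu$, after which both implications fall out from the universal property of the quotient. The utility of the theorem for the remainder of the paper will be to reduce the verification of ZPD for the matrix Lie algebra $M_\L$ to the more concrete combinatorial task of exhibiting rank-one generators for the kernel of the Lie bracket.
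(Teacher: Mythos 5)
The paper states this theorem as an imported result from \cite{brice2015zero} and supplies no proof of its own, so there is no in-paper argument to compare against; your proof is correct and is the standard one. Both directions are sound: the backward implication follows from the universal property of the quotient $(A \otimes A)/\Ker \mu \cong A^2$ once one observes that condition \ref{eq:A} is exactly the statement that $\varphi$ kills the rank-one tensors in $\Ker \mu$, and the forward implication correctly tests the zero-product-determined hypothesis against the quotient map modulo the span $K$ of those rank-one tensors to force $\Ker \mu = K$.
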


We note that while $A \otimes A$ is generated by rank-one tensors by
definition, an arbitrary subspace of $A \otimes A$ need not be
generated by the rank-one tensors it contains.

\section{Main Result}

We state and prove our main result.

\begin{proposition}
  Let $\L$ be a $1$-step ladder on $n$.
  The ladder matrix Lie algebra $M_\L$ is zero product determined.
\end{proposition}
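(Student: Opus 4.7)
The plan is to invoke Theorem~\ref{thm:C}: writing $\mu(x \otimes y) = [x, y]$, it suffices to show that $\Ker \mu$ is spanned by rank-one tensors $x \otimes y$ with $[x, y] = 0$. Let the unique step of $\L$ be $(m, p)$ and partition $\{1, \ldots, n\}$ into $I_1 = \{1, \ldots, p-1\}$, $I_2 = \{p, \ldots, m\}$, $I_3 = \{m+1, \ldots, n\}$. If $I_2 = \emptyset$, then no two matrix units in $M_\L$ have a nonzero product, so $M_\L$ is abelian and Theorem~\ref{thm:D} finishes. Otherwise decompose $M_\L = A \oplus B \oplus C \oplus D$, where the four summands are spanned by the matrix units indexed by $I_1 \times I_2$, $I_1 \times I_3$, $I_2 \times I_2$, and $I_2 \times I_3$ respectively. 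A short direct computation yields $C \cong \mathfrak{gl}_{|I_2|}$ under the Lie bracket, $[A, C] \subseteq A$, $[C, D] \subseteq D$, $[A, D] \subseteq B$, and all remaining brackets vanish; in particular $A$ and $D$ are abelian and $B$ is central.

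Since $A, B, C, D$ are linearly independent in $M_\L$, the condition $\mu(\xi) = 0$ splits according to target component. The nine source summands $X \otimes Y$ on which $[X, Y] = 0$ (namely $A \otimes A$, $D \otimes D$, and every summand involving $B$) sit entirely in $\Ker \mu$ and are already spanned by their own rank-one tensors, each a zero-product tensor; these present no difficulty. The genuine work lies in four pieces: $\Ker(\mu|_{C \otimes C})$, $\Ker(\mu|_{A \otimes C \oplus C \otimes A})$, $\Ker(\mu|_{C \otimes D \oplus D \otimes C})$, and $\Ker(\mu|_{A \otimes D \oplus D \otimes A})$. The first is handled directly: since $C$ is isomorphic as a Lie algebra to $\mathfrak{gl}_{|I_2|}$, Theorem~\ref{thm:B} combined with Theorem~\ref{thm:C} shows $\Ker(\mu|_{C \otimes C})$ is spanned by rank-one zero-product tensors of $C \otimes C$.

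The other three I would reduce to a common bimodule lemma: for rectangular matrix spaces $U$, $V$ whose matrix product makes sense, the kernel of $U \otimes V \to UV$, $u \otimes v \mapsto uv$, is spanned by pure tensors with $uv = 0$. Expanding in matrix units, the off-diagonal tensors $e_{i,k} \otimes e_{l,j}$ with $k \neq l$ are themselves zero-product, while each diagonal relation $e_{i,k} \otimes e_{k,j} - e_{i,k'} \otimes e_{k',j}$ is captured by
\[
(e_{i,k} - e_{i,k'}) \otimes (e_{k,j} + e_{k',j}) + (e_{i,k} + e_{i,k'}) \otimes (e_{k,j} - e_{k',j}) = 2\bigl(e_{i,k} \otimes e_{k,j} - e_{i,k'} \otimes e_{k',j}\bigr),
\]
whose left-hand side is a sum of two rank-one zero-product tensors (a separate small argument covers characteristic~$2$). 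The crossed kernels then reduce to pure ones by subtracting a canonical corrector: for $\xi_1 + \xi_2 \in \Ker(\mu|_{A \otimes C \oplus C \otimes A})$ with $\mu(\xi_1) = \alpha$ and $\mu(\xi_2) = -\alpha$, subtract $\alpha \otimes I_{I_2} + I_{I_2} \otimes \alpha$, which equals $(\alpha + I_{I_2}) \otimes (\alpha + I_{I_2}) - \alpha \otimes \alpha - I_{I_2} \otimes I_{I_2}$ and is therefore in the span of rank-one zero-product tensors of $M_\L \otimes M_\L$; the analogous trick handles $C \otimes D \oplus D \otimes C$, while for $A \otimes D \oplus D \otimes A$ one writes the common image $\beta \in B$ as $\sum_\nu a_\nu d_\nu$ and uses the identity $(a + d) \otimes (a + d) - a \otimes a - d \otimes d = a \otimes d + d \otimes a$.

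The principal anticipated obstacle is the bimodule lemma itself---essentially the statement that rectangular matrix multiplication is zero product determined as a bilinear map into the product space---together with checking the identities above when $2$ is not invertible; once these are in place, the rest of the proof is bookkeeping using the block decomposition.
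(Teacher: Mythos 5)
Your proposal is correct, and it shares the paper's skeleton: the same block decomposition (your $A$, $B$, $C$, $D$ are the paper's $\l$, $\a$, $\h$, $\r$), the same dispatch of the abelian case to Theorem \ref{thm:D}, and the same use of Theorems \ref{thm:B} and \ref{thm:C} to handle the diagonal block $C \otimes C$. Where you genuinely diverge is in how spanning of $\Ker \mu$ by rank-one tensors is established. The paper writes down an explicit list of rank-one kernel tensors (your off-diagonal tensors, your diagonal-relation witnesses $(e_{i,k}-e_{i,k'}) \otimes (e_{k,j}+e_{k',j})$, and squares $(x+y) \otimes (x+y)$ playing the role of your correctors), computes $\dim \Ker \mu$ as an explicit polynomial in $n_1, n_2, n_3$, and then proves linear independence so that a count finishes the job. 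You never count dimensions: you split $\Ker \mu$ by target component, isolate a reusable lemma (the kernel of rectangular matrix multiplication $U \otimes V \to UV$ is spanned by pure zero-product tensors), and reduce each mixed kernel to it by subtracting a corrector such as $\alpha \otimes I + I \otimes \alpha = (\alpha + I) \otimes (\alpha + I) - \alpha \otimes \alpha - I \otimes I$. This buys a cleaner logical structure---no polynomial bookkeeping, no independence verification, and no induction like the paper's reduction of $R''(i,q)$---at the cost of having to prove the bimodule lemma and legitimize the correctors; both are in order, since your correctors are sums of self-commuting squares and the surjectivity you implicitly use ($[\alpha, I] = \alpha$ and $AD = B$) holds because $I_2 \neq \emptyset$. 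One small improvement: your characteristic-$2$ caveat is unnecessary, because the diagonal relation decomposes in any characteristic as
\[
  e_{i,k} \otimes e_{k,j} - e_{i,k'} \otimes e_{k',j}
  = (e_{i,k} - e_{i,k'}) \otimes (e_{k,j} + e_{k',j})
  - e_{i,k} \otimes e_{k',j} + e_{i,k'} \otimes e_{k,j}
  \text{,}
\]
the last two terms being off-diagonal zero-product tensors; this is exactly the decomposition the paper uses for its $S$ and $V$ families.
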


\begin{proof}
  Let $\L = \{ (i_1, j_1) \}$.
  If $i_1 < j_1$, then $M_\L$ is abelien and is zero product determined
  by Theorem \ref{thm:D}.
  We assume without loss of generality that $i_1 \geq j_1$.

  Let $\mu : \sum_i x_i \otimes y_i \mapsto \sum_i [x_i, y_i]$.
  In light of Theorem \ref{thm:C}, our task is to construct a basis of
  $\Ker \mu$ consisting of elements of $M_\L \otimes M_\L$ of the form
  $x \otimes y$ with $x, y \in M_\L$.

  We partition $M_\L$ into blocks of size $n_1 = j_1 - 1 \geq 0$,
  $n_2 = i_1 - j_1 + 1 > 0$, and $n_3 = n - i_1 \geq 0$ so that
  $n_1 + n_2 + n_3 = n$.
  Under this block scheme, $M_\L$ has the form
  \[
    M_\L =
    \bordermatrix{
          & n_1 & n_2 & n_3 \cr
      n_1 & 0   & \l  & \a  \cr
      n_2 & 0   & \h  & \r  \cr
      n_3 & 0   & 0   & 0
    }
    \text{,}
  \]
  or in case $n_1 = 0$
  \[
    M_\L =
    \bordermatrix{
          & n_2 & n_3 \cr
      n_2 & \h  & \r  \cr
      n_3 & 0   & 0
    }
    \text{,}
  \]
  or in case $n_3 = 0$
  \[
    M_\L =
    \bordermatrix{
          & n_1 & n_2 \cr
      n_1 & 0   & \l  \cr
      n_2 & 0   & \h
    }
    \text{,}
  \]
  where each of $\h$, $\l$, $\r$, and $\a$ is a subalgebra consisting of
  the full matrix subspace of the appropriate size.
  All three cases are treated simultaneously by the below argument.

  $M_\L$ admits the structural decomposition
  \[
    M_\L = \h \ltimes \big( (\l \dot{+} \r) \ltimes \a \big)
  \]
  obeying multiplication containment relations below.
  \[
    \begin{array}{c|cccc}
      [\cdot,\cdot] & \h & \l & \r & \a \\
      \hline
      \h & \h & \l & \r & 0 \\
      \l & \l & 0 & \a & 0 \\
      \r & \r & \a & 0 & 0 \\
      \a & 0 & 0 & 0 & 0
    \end{array}
  \]
  (where $\l = \a = 0$ in case $n_1 = 0$
  and $\r = \a = 0$ in case $n_3 = 0$.)

  We require the dimension of $\Ker \mu$.

  We see that for $h \in \h$ and $r \in \r$ we have $[h, r] = hr$, since
  $rh = 0$, and similarly with $l \in \l$ we have $[h, l] = -lh$.
  Thus $[\h, \r] = \r$ and $[\h, \l] = \l$.
  Furthermore, for $l \in \l$ and $r \in \r$, we have $[l, r] = lr$,
  whereby $[\l, \r] = \a$.
  Finally, $[\h, \h]$ produces only the traceless matrices,
  thus $\dim [\h, \h] = \dim \h - 1$.

  In light of these observations, we find that $\Ker \mu$ has dimension
  \begin{gather*}
    n_1^2n_2^2 + 2n_1^2n_2n_3 + n_1^2n_3^2 + 2n_1n_2^3 + 4n_1n_2^2n_3
    + 2n_1n_2n_3^2 \\
    - n_1n_2 - n_1n_3 + n_2^4 + 2n_2^3n_3 + n_2^2n_3^2 - n_2^2 - n_2n_3
    + 1\text{.}
  \end{gather*}

  Each pairing of subspaces that is killed by the bracket yields its
  full basis of rank-one tensors to $\Ker \mu$. We have:
  \[
    \begin{array}{c|c}
      \text{Subspace pair} & \text{Rank-one tensors contributed} \\
      \hline
      \mu (\h \otimes \a) = 0 = \mu (\a \otimes \h) & 2n_1n_2^2n_3 \\
      \mu (\l \otimes \a) = 0 = \mu (\a \otimes \l) & 2n_1^2n_2n_3 \\
      \mu (\r \otimes \a) = 0 = \mu (\a \otimes \r) & 2n_1n_2n_3^2 \\
      \mu (\a \otimes \a) = 0 & n_1^2n_3^2 \\
      \mu (\l \otimes \l) = 0 & n_1^2n_2^2 \\
      \mu (\r \otimes \r) = 0 & n_2^2n_3^2
    \end{array}
  \]

  Further, $\h$ is isomorphic to $M^{n_2 \times n_2}_F$, which is zero
  product determined as a Lie algebra by Theorem \ref{thm:B}. By Theorem
  \ref{thm:C} there are $n_2^4 - n_2^2 + 1$ rank-one tensors in
  $\h \otimes \h$ that $\mu$ kills.
  The above listed rank-one tensors in $\Ker \mu$ are linearly
  independent by construction from block pairings.
  This leaves
  \[
    2 n_1 n_2^3 + 2 n_2^3 n_3 + 2 n_1 n_2^2 n_3
    - n_1 n_2 - n_1 n_3 - n_2 n_3
  \]
  rank-one tensors in $\Ker \mu$ we have left to construct.

  We examine $\h \otimes \r$, $\r \otimes \h$,
  and $(\h \dot{+} \r) \otimes (\h \dot{+} \r)$.
  We will find that these subspaces contribute $2n_2^3n_3 - n_2n_3$
  tensors to our basis.

  Consider the $2n_2^3n_3 - 2n_2^2n_3$ tensors
  \[
    T_{i,j,l,q} = e_{i, j} \otimes
    e_{l, q} \in \h \otimes \r
  \]
  and
  \[
    T^{i,j,l,q} = e_{l, q} \otimes
    e_{i, j} \in \r \otimes \h
  \]
  for $i,j,l \in (n_1, n_1 + n_2]$
  and $q \in (n_1 + n_2, n_1 + n_2 + n_3]$
  with $j \neq l$.

  Additionally, we have $2n_2^2n_3 - 2n_2n_3$ tensors
  \[
    S_{i,j,q} =
    \left( e_{i, j} - e_{i, j+1} \right)
    \otimes
    \left( e_{j, q} + e_{j + 1, q} \right)
    \in \h \otimes \r
  \]
  and
  \[
    S^{i,j,q} =
    \left( e_{j, q} + e_{j + 1, q} \right)
    \otimes
    \left( e_{i, j} - e_{i, j+1} \right)
    \in \r \otimes \h
  \]
  with $i \in (n_1, n_1 + n_2]$,
  $j \in (n_1, n_1 + n_2 - 1]$,
  and $q \in (n_1 + n_2, n_1 + n_2 + n_3]$.

  Finally, we have $n_2n_3$ tensors of the form
  \[
    R(i,q) =
    \left( e_{i, i} + e_{i, q} \right)
    \otimes
    \left( e_{i, i} + e_{i, q} \right)
    \in (\h \dot{+} \r) \otimes (\h \dot{+} \r)
  \]
  for $i \in (n_1, n_1 + n_2]$
  and $q \in (n_1 + n_2, n_1 + n_2 + n_3]$,
  giving the desired $2n_2^3n_3 - n_2n_3$ rank-one tensors.
  By applying $\mu (x \otimes y) = [x,y]$,
  we see that each tensor above is in $\Ker \mu$.
  We must show that these tensors are linearly independent.

  Expanding $S_{i,j,q}$ we see that
  \[
    S_{i,j,q} =
    \underbrace{
      e_{i, j} \otimes e_{j, q} -
      e_{i, j + 1} \otimes e_{j + 1, q}
    }_{ \notin \Span \{ T_{i,j,l,q} \} }
    +
    \underbrace{
      e_{i, j} \otimes e_{j + 1, q} -
      e_{i, j + 1} \otimes e_{j, q}
    }_{ \in \Span \{ T_{i,j,l,q} \} }
  \]
  is not in the span of the $T_{i,j,l,q}$ tensors.
  A similar observation shows that $S^{i,j,q}$ is not in the span of the
  $T^{i,j,l,q}$ tensors.

  Expanding $R(i,q)$ we have
  \[
    R(i,q)
    = \underbrace{ e_{i,i} \otimes e_{i,i} }_{ \in \h \otimes \h }
    + \underbrace{ e_{i,q} \otimes e_{i,q} }_{ \in \r \otimes \r }
    + \underbrace{
      e_{i,i} \otimes e_{i,q} + e_{i,q} \otimes e_{i,i}
    }_{ \in \h \otimes \r \dot{+} \r \otimes \h }\text{.}
  \]
  Since $e_{i,i} \otimes e_{i,i}$ and $e_{i,q} \otimes e_{i,q}$ are in
  $\h \otimes \h$ and $\r \otimes \r$, respectively,
  and since tensors from those blocks have been accounted for above,
  we may subtract those terms, leaving
  $R'(i,q) = e_{i,i} \otimes e_{i,q} + e_{i,q} \otimes e_{i,i}$.
  $R'(i,q)$ is not in the span of
  $\left\{ T_{i,j,l,q}, T^{i,j,l,q} \right\}$
  since we require $j \neq l$ in $T_{i,j,l,q}$ and $T^{i,j,l,q}$.

  Now, consider $S_{i,i,q} + S^{i,i,q}$ where $i < n_1 + n_2$
  ($R(i,j)$ is linearly independent of the $S_{i,j,q}$ and $S^{i,j,q}$
  tensors in case $i = n_1 + n_2$, since we require
  $j \leq n_1 + n_2 - 1$ in $S_{i,j,q}$ and $S^{i,j,q}$).
  We have
  \[
    S_{i,i,q} + S^{i,i,q}
    = \underbrace{
      e_{i,i} \otimes e_{i,q} + e_{i,q} \otimes e_{i,i}
    }_{ = R'(i,q) }
    + T
    - \left(
      e_{i,i+1} \otimes e_{i+1,q}
      + e_{i+1,q} \otimes e_{i, i+1}
    \right)
  \]
  with $T \in \Span \left\{ T_{i,j,l,q}, T^{i,j,l,q} \right\}$,
  so we have
  \[
    R'(i,q)
    = S_{i,i,q} + S^{i,i,q} - T
    + e_{i,i+1} \otimes e_{i+1,q} + e_{i+1,q} \otimes e_{i, i+1}\text{.}
  \]

  Write
  $R''(i,q)
  = e_{i,i+1} \otimes e_{i+1,q}
  + e_{i+1,q} \otimes e_{i, i+1}$.
  Now, if $i = n_1 + n_2-1$ we are done (as above).
  If $i < n_1 + n_2-1$ we may reduce $R''(i,q)$ using the same method
  just employed, and so by induction we are done.
  That is to say that $T_{i,j,l,q}$, $T^{i,j,l,q}$,
  $S_{i,j,q}$, $S^{i,j,q}$, and $R(i,j)$ are linearly independent.

  Next, we examine $\h \otimes \l$, $\l \otimes \h$,
  and $(\h \dot{+} \l) \otimes (\h \dot{+} \l)$.
  The consideration of these subspaces is symmetric with the subspaces
  considered above,
  and so we will find that these subspaces contribute
  $2n_1n_2^3 - n_1n_2$ tensors to our basis of $\Ker \mu$.

  Finally, we examine $\l \otimes \r$, $\r \otimes \l$,
  and $(\l \dot{+} \r) \otimes (\l \dot{+} \r)$.
  We proceed similarly to the discussion of $\h$ and $\r$ above,
  and we will find that $\l$ and $\r$ contribute the remaining
  $2n_1n_2^2n_3 - n_1n_3$ rank-one tensors needed to span $\Ker \mu$.

  Consider the $2n_1n_2^2n_3 - 2n_1n_2n_3$ tensors
  \[
    U_{i,j,l,q} = e_{i, j} \otimes
    e_{l, q} \in \l \otimes \r
  \]
  and
  \[
    U^{i,j,l,q} = e_{l, q} \otimes
    e_{i, j} \in \r \otimes \l
  \]
  for $i \in (0, n_1]$,
  $j, l \in (n_1, n_1 + n_2]$,
  and $q \in (n_1 + n_2, n_1 + n_2 + n_3]$
  with $j \neq l$.

  Additionally, we have $2n_1n_2n_3 - 2n_1n_3$ tensors
  \[
    V_{i,j,q} =
    \left( e_{i, j} - e_{i, j+1} \right)
    \otimes
    \left( e_{j, q} + e_{j + 1, q} \right)
    \in \l \otimes \r
  \]
  and
  \[
    V^{i,j,q} =
    \left( e_{j, q} + e_{j + 1, q} \right)
    \otimes
    \left( e_{i, j} - e_{i, j+1} \right)
    \in \r \otimes \l
  \]
  with $i \in (0, n_1]$,
  $j \in (n_1, n_1 + n_2 - 1]$,
  and $q \in (n_1 + n_2, n_1 + n_2 + n_3]$.

  Finally, we have $n_1n_3$ tensors of the form
  \[
    W(i,q) =
    \left( e_{i, n_1 + n_2} + e_{n_1 + n_2, q} \right)
    \otimes
    \left( e_{i, n_1 + n_2} + e_{n_1 + n_2, q} \right)
    \in (\l \dot{+} \r) \otimes (\l \dot{+} \r)
  \]
  for $i \in (0, n_1]$
  and $q \in (n_1 + n_2, n_1 + n_2 + n_3]$,
  giving the remaining $2n_1n_2^2n_3 - n_1n_3$ rank-one tensors.
  Again, the above tensors were chosen so that applying
  $\mu (x \otimes y) = [x,y]$ results in $0$.
  Below we verify that they are linearly independent.

  Expanding $V_{i,j,q}$ we see that
  \[
    V_{i,j,q} =
    \underbrace{
      e_{i, j} \otimes e_{j, q} -
      e_{i, j + 1} \otimes e_{j + 1, q}
    }_{ \notin \Span \{ U_{i,j,l,q} \} }
    +
    \underbrace{
      e_{i, j} \otimes e_{j + 1, q} -
      e_{i, j + 1} \otimes e_{j, q}
    }_{ \in \Span \{ U_{i,j,l,q} \} }
  \]
  is not in the span of the $U_{i,j,l,q}$ tensors.
  A similar observation shows that $V^{i,j,q}$ is not in the span of the
  $U^{i,j,l,q}$ tensors.

  Expanding $W(i,q)$ we have
  \begin{align*}
    W(i,q)
    &= \underbrace{
      e_{i,n_1 + n_2} \otimes e_{i,n_1 + n_2}
    }_{ \in \l \otimes \l }
    + \underbrace{
      e_{n_1 + n_2,q} \otimes e_{n_1 + n_2,q}
    }_{ \in \r \otimes \r }
    \\ &+ \underbrace{
      e_{i,n_1 + n_2} \otimes e_{n_1 + n_2,q}
      + e_{n_1 + n_2,q} \otimes e_{i,n_1 + n_2}
    }_{ \in \l \otimes \r \dot{+} \r \otimes \l }\text{.}
  \end{align*}
  $\l \otimes \l$ and $\r \otimes \r$ are accounted for above,
  so we may subtract their terms, leaving
  \[
    W'(i,q) =
    e_{i,n_1 + n_2} \otimes e_{n_1 + n_2,q}
    + e_{n_1 + n_2,q} \otimes e_{i,n_1 + n_2}
    \text{.}
  \]
  $W'(i,q)$ is not in the span of
  $\left\{ U_{i,j,l,q}, U^{i,j,l,q} \right\}$
  since we require $j \neq l$ in $U_{i,j,l,q}$ and $U^{i,j,l,q}$.
  We also see immediately that $W'(i,q)$ is not in the span of
  $\left\{ V_{i,j,q}, V^{i,j,q} \right\}$
  since we require $j < n_1 + n_2$ in $V_{i,j,q}$ and $V^{i,j,q}$.
  Thus we have that $U_{i,j,l,q}$, $U^{i,j,l,q}$,
  $V_{i,j,q}$, $V^{i,j,q}$, and $W(i,j)$ are linearly independent.

  Having explicitly constructed a basis for $\Ker \mu$ consisting of
  rank-one tensors, the proof is complete.
\end{proof}

\section*{Acknowledgment}

The author wishes to express his gratitude to the referee for their
helpful comments, corrections, and suggestions.

\printbibliography

\end{document}